\newtheorem{thm}{Theorem}
\newtheorem{claim}[thm]{Claim}
\newtheorem{lem}[thm]{Lemma}
\newenvironment{allintypewriter}{\ttfamily}{\par}
\title{Multicolor, multipartite Ramsey numbers for quadrilateral}
\date{}
\author{Janusz Dybizba\'nski\\
\small Institute of Informatics, Faculty of Mathematics, Physics and Informatics,\\[-0.8ex] \small University of Gda\'nsk, Wita  Stwosza 57, Gda\'nsk, Poland\\[-0.8ex]
\small \texttt{jdybiz@inf.ug.edu.pl}\\[2ex]
Yaser Rowshan\\
\small Institute for Advanced Studies in Basic Sciences (IASBS), \\[-0.8ex] \small Department of Mathematics, Zanjan 45137-66731, Iran\\[-0.8ex]
\small \texttt{y.rowshan@iasbs.ac.ir,~~~y.rowshan.math@gmail.com}\\
}
\begin{document}
\maketitle
\begin{abstract}
\noindent\noindent The $p$-partite Ramsey number for quadrilateral, denoted by $r_p(C_4,k)$, is the least positive integer $n$ such that any coloring of the edges of a complete $p$-partite graph with $n$ vertices in each partition with $k$ colors will result in a monochromatic copy of $C_4$. 
In this paper, we present an upper bound for $r_p(C_4,k)$ and the exact values of $r_p(C_4,2)$ for all $p\geq2$. In tripartite case we show that $r_3(C_4,k) \leq \lfloor (k+1)^2/2\rfloor-1$ and the exact value of 4-color tripartite Ramsey number $r_3(C_4,4)=11$.
\end{abstract}

\section{Introduction}

In this paper all graphs considered are undirected, finite and
contain neither loops nor multiple edges. Let $G$ be such a graph.
The vertex set of $G$ is denoted by $V(G)$, the edge set of $G$
by $E(G)$, and the number of edges in $G$ by $e(G)$. Let $\deg(v)$ be the degree of vertex $v$ and $\deg_i(v)$ denote the number of the edges incident to $v$ colored with color $i$. The open neighborhood in color $i$ of vertex $v$ is $N_i(v) = \{u \in V(G) | \{u,v\} \in E(G) \textrm { and \{u,v\} is colored with color $i$} \}$. For vertex $v\in V(G)$ and $U\subset V(G)$ let $\deg_i(v,U)$ be the number of edges in color $i$ between $v$ and vertices from $U$. Define $G[S]$ to be the subgraph of $G$ induced by a set of vertices $S \subset V(G)$ and $G^{i}$ to be the graph containing the edges of $G$ in color $i$. Let $C_n$ be the cycle on $n$ vertices, $K_n$ be the complete graph on $n$ vertices, and $K_n^p$ be the complete $p$-partite graph with every partition of size $n$.

For given graph $G$ and $k \geq 2$, the \emph{multicolor Ramsey number} $R_k(G)$
is the smallest integer $n$ such that if we arbitrarily color
with $k$ colors the edges of the complete graph $K_n$,
then it contains a monochromatic copy of $G$ in one of the colors. In this paper we consider Ramsey number for quadrilateral ($G=C_4$). In 1972 Chv\'atal and Harary~\cite{CH1} determined $R_2(C_4) = 6$. Bialostocki and Sch\"{o}nheim ~\cite{BiS} proved $R_3(C_4)=11$. Lower bound for $R_4(C_4)=18$ was determined by Exoo~\cite{Ex2} and upper bound by Sun Yongqi {\it et al.}~\cite{SunYLZ1}. Lazebnik and Woldar~\cite{LaWo} showed the best known bounds for $5$-color Ramsey number for quadrilateral $27 \leq R_5(C_4) \leq 29$. More bounds and general results for Ramsey number we can found in regularly updated survey by Radziszowski~\cite{Rad}.

The \emph{$p$-partite Ramsey number} for quadrilateral, denoted by $r_p(C_4,k)$, is the least positive integer $n$ such that any coloring of the edges of 
$K_n^p$
with $k$ colors will result in a monochromatic copy of $C_4$ in one of the colors. 

The study of bipartite Ramsey numbers ($p=2$) was initiated by Beineke and Schwenk in 1976, and continued by others, in particular Exoo \cite{Ex1}, Hattingh and Henning \cite{HaH}, Goddard, Henning, and Oellermann \cite{GHO}, and Lazebnik and Mubayi \cite{LaM}. The exact values of $r_2(C_4,k)$ are known for $k\leq 4$. Beineke and Schwenk~\cite{BeS} proved that $r_2(C_4,2)=5$, Exoo~\cite{Ex1} found the value $r_2(C_4,3)=11$ and, independently, Steinbach and Posthoff~\cite{SP} and Dybizba\'nski, Dzido and Radziszowski~\cite{DDR} showed that $r_2(C_4,4)=19$. We also know that for every integer $k \geq 5$, $k^2+1 \leq r_2(C_4,k) \leq k^2+k-2$~\cite{DDR}.

In tripartite case ($p=3$) exact value of Ramsey numbers $r_3(C_4,k)$ is known for $k\leq 3$. In 2014 Buada, Samana and Longani~\cite{BSL} prove that  $r_3(C_4,3)=7$. In the same paper one can find a proof that $r_3(C_4,2)=3$.

In this paper we show upper bound for $r_p(C_4,k)$ (Theorem~\ref{gora}), exact values of $r_p(C_4,2)$ for all $p\geq 2$ (Theorem~\ref{dwakoloryduzopartycji}). In tripartite case we show that $r_3(C_4,k) \leq \lfloor (k+1)^2/2\rfloor-1$ (Theorem~\ref{3partgora}) and determine the exact value of 4-color tripartite Ramsey number $r_3(C_4,4)=11$ (Theorem~\ref{twc411}).
\section{Upper bound}

The well known version of Cauchy-Schwarz theorem says that if $a_1,...,a_n$ is a sequence of non-negative real numbers and $M=\sum_{i=1}^n a_i$, then $\sum_{i=1}^n a_i^2 \geq M^2/n$. Moreover, $\sum_{i=1}^n a_i^2 = M^2/n$ if and only if $a_i = M/n$ for every $i$. We will use another (integer) version of this theorem. 
\begin{thm}
Let $a_1,...,a_n$ be a sequence of non-negative integers and $M=\sum_{i=1}^n a_i$, then 
$$\sum_{i=1}^n \binom{a_i}{2} \geq r\binom{a+1}{2} + (M-r) \binom{a}{2},$$ where
$a$ and $r$ are integers such that $M=an+r$ and $0\leq r < n$. Moreover, the minimum is reachable if and only if $|a_i-a_j|\leq 1$ for every $1\leq i < j \leq n$.
\label{CS}
\end{thm}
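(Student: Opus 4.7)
The plan is to prove this by a standard smoothing (exchange) argument. I would first observe that the claimed lower bound is exactly $\sum_{i=1}^n \binom{b_i}{2}$ where $(b_1,\dots,b_n)$ is the ``balanced'' tuple consisting of $r$ entries equal to $a+1$ and $n-r$ entries equal to $a$ (note that $r(a+1)+(n-r)a = na+r = M$, so this tuple has the correct sum and in the statement $(M-r)\binom{a}{2}$ must be read as $(n-r)\binom{a}{2}$). The strategy is then to show that any other tuple with sum $M$ can be strictly improved by a single balancing step, so the balanced tuple is the unique minimizer up to permutation.

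The key computation is the following smoothing step. Suppose some $a_i$ and $a_j$ satisfy $a_i \geq a_j + 2$, and define $a_i' = a_i - 1$, $a_j' = a_j + 1$ (leaving the other entries unchanged). Using $\binom{x}{2}-\binom{x-1}{2} = x-1$, I compute
$$
\Bigl(\binom{a_i}{2}+\binom{a_j}{2}\Bigr) - \Bigl(\binom{a_i'}{2}+\binom{a_j'}{2}\Bigr) = (a_i-1) - a_j = a_i - a_j - 1 \geq 1,
$$
so the sum $\sum_{i=1}^n \binom{a_i}{2}$ decreases by at least $1$ (strictly) under this move, while $M$ is preserved and all entries remain non-negative integers.

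Now I would argue as follows. Starting from any tuple $(a_1,\dots,a_n)$ of non-negative integers summing to $M$ and iterating the smoothing step whenever two entries differ by at least $2$, the sum $\sum\binom{a_i}{2}$ strictly decreases at each step and is bounded below by $0$, so the process terminates in finitely many steps at a tuple in which every two entries differ by at most $1$. Any such terminal tuple has entries taking at most two consecutive values, say $a$ and $a+1$; if there are $r'$ entries equal to $a+1$ and $n-r'$ equal to $a$, then $M = an+r'$ with $0 \leq r' < n$, which by uniqueness of division forces $r' = r$. Hence every terminal tuple is a permutation of the balanced one, and its value of $\sum\binom{a_i}{2}$ is exactly $r\binom{a+1}{2}+(n-r)\binom{a}{2}$, proving the inequality.

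Finally, the equality case is immediate from the fact that the smoothing step is \emph{strictly} decreasing: if the original tuple is not already balanced (i.e.\ some $|a_i - a_j| \geq 2$), then a single smoothing step yields a tuple with strictly smaller value of $\sum\binom{a_i}{2}$, which is at least the balanced lower bound, so the original value is strictly larger than the bound. Conversely, if $|a_i-a_j|\leq 1$ for all $i,j$, then $(a_1,\dots,a_n)$ is a permutation of the balanced tuple and equality holds. I do not anticipate any real obstacle here; the only mild subtlety is the uniqueness of the representation $M = an+r$ with $0 \leq r < n$, which pins down the extremal multiset.
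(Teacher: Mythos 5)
Your smoothing argument is correct and is essentially the approach the paper itself takes; the paper only sketches it, via the remark that minimizing $\sum_{i=1}^n\binom{a_i}{2}$ is equivalent to minimizing $\sum_{i=1}^n a_i^2$ together with the same exchange step $a_j'=a_j-1$, $a_i'=a_i+1$, whereas you carry the exchange out directly on $\sum\binom{a_i}{2}$ and handle termination and the equality case explicitly. You are also right to read $(M-r)\binom{a}{2}$ as $(n-r)\binom{a}{2}$: as literally written the bound fails (e.g.\ $n=2$, $a_1=a_2=2$ gives $\sum\binom{a_i}{2}=2$ but $r\binom{a+1}{2}+(M-r)\binom{a}{2}=4$), and indeed the paper's own application in Theorem~\ref{gora} immediately replaces it by $((p-1)n-r)\binom{a}{2}$.
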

Note that: (1) $\sum_{i=1}^n \binom{a_i}{2}$ has the minimum value if and only if $\sum_{i=1}^n a_i^2$ has the minimum value. (2) If $a_j-a_i > 1$ for some $i\neq j$ then setting $a_j'=a_j-1$ and $a_i'=a_i+1$ we obtain the sequence with smaller $\sum_{i=1}^n a_i^2$.

\begin{thm}
Let $p,k\geq 2$ and $n\geq 1$ be integers, $w=\left\lceil \frac{2\Big\lceil \frac{n^2p(p-1)}{2k} \Big\rceil}{p} \right\rceil$, and $a$ and $r$ are integers such that $w=a(p-1)n+r$ and $0\leq r<(p-1)n$. If
\begin{equation}
\label{jeden}
r\binom{a+1}{2}+((p-1)n-r)\binom{a}{2}>\binom{n}{2}
\end{equation}
\label{gora}
then $r_p(C_4,k) \leq n$.
\end{thm}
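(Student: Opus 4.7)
The plan is a double-counting argument on monochromatic length-two paths (cherries). Since $K_n^p$ has $p(p-1)n^2/2$ edges, pigeonhole on the $k$ colors produces a color, which I will call color~$1$, containing at least $M=\lceil n^2p(p-1)/(2k)\rceil$ edges. Summing the color-$1$ degree over all vertices and grouping by part gives $\sum_{i}\sum_{v\in V_i}\deg_1(v)=2|E(G^1)|\geq 2M$, because each color-$1$ edge has its two endpoints in two distinct parts. Hence some part $V_i$ satisfies $\sum_{v\in V_i}\deg_1(v)\geq\lceil 2M/p\rceil=w$.

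Next, I would switch vantage and count the color-$1$ edges between $V_i$ and its complement from the outside. Writing $b_u=\deg_1(u,V_i)$ for each $u\in V(K_n^p)\setminus V_i$, one has $\sum_u b_u\geq w$ with exactly $(p-1)n$ summands, and the number of cherries with center in $V\setminus V_i$ and both endpoints in $V_i$ is $\sum_u\binom{b_u}{2}$. Theorem~\ref{CS} applied to the sequence $(b_u)$ with $w=a(p-1)n+r$ yields
\[
\sum_u\binom{b_u}{2}\geq r\binom{a+1}{2}+((p-1)n-r)\binom{a}{2};
\]
if $\sum_u b_u$ exceeds $w$ the bound only strengthens, since transferring extra weight to any $b_u$ increases $\binom{b_u}{2}$.

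To finish I would pigeonhole on the endpoint pair: each such cherry has an unordered endpoint pair inside $V_i$, and there are only $\binom{n}{2}$ such pairs. Hypothesis~(\ref{jeden}) makes the displayed lower bound strictly larger than $\binom{n}{2}$, so some pair $\{x,y\}\subseteq V_i$ is realized by two cherries with different centers $u,u'\in V\setminus V_i$. The four vertices $x,u,y,u'$ are distinct, all four edges $xu,\,uy,\,yu',\,u'x$ lie in $K_n^p$ because each pair straddles $V_i$ and its complement, and by construction all four are colored with color~$1$, giving a monochromatic $C_4$. The only delicate choice is the vantage point: counting cherries from outside $V_i$ yields $(p-1)n$ Cauchy--Schwarz summands against only $\binom{n}{2}$ endpoint pairs, whereas the reverse bookkeeping (centers inside $V_i$) would give only $n$ summands against $\binom{(p-1)n}{2}$ pairs and a much weaker criterion; apart from this choice the argument is a routine two-step pigeonhole with the integer Cauchy--Schwarz inequality supplying the convexity estimate.
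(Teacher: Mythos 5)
Your proposal is correct and follows essentially the same route as the paper: pigeonhole on colors, then on parts to find a part $V_i$ with total monochromatic degree at least $w$, then count monochromatic cherries centered outside $V_i$ with both endpoints in $V_i$, bound them below via the integer Cauchy--Schwarz lemma, and pigeonhole against the $\binom{n}{2}$ endpoint pairs to force two cherries sharing an endpoint pair and hence a monochromatic $C_4$. Your remark on why the vantage point must be outside $V_i$ is a nice addition but does not change the argument.
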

\begin{proof}
Let $G=(V_1 \cup ... \cup V_p, E)$ be $p$-partite graph of order $pn$. For every $1\leq i \leq p$, $V_i$ is an independent set and $|V_i|=n$. By the pigeonhole principle, there exist $i$ ($1\leq i \leq p$), such that 
\begin{equation}
\label{dwa}
\sum_{v\in V_i} \deg(v) \geq \Big\lceil \frac{2|E|}{p} \Big\rceil
\end{equation}
Complete $p$-partite graph $K_n^p$ has $n^2p(p-1)/2$ edges and in every $k$-edge-coloring there exists a color, say $b$ (blue), containing at least 
$$\left\lceil \frac{n^2p(p-1)}{2k} \right\rceil$$ edges.\\

\noindent\noindent Moreover, by~\eqref{dwa}, there exists a partition $j$, say $j=1$, such that
$$\sum_{v\in V_1} \deg_b(v) \geq \left\lceil \frac{2\left\lceil \frac{n^2p(p-1)}{2k} \right\rceil}{p} \right\rceil = w$$
Let $U=V\setminus V_1 = \cup_{l=2}^{p}{V_l} = \{ u_1, u_2, ... ,u_{(p-1)n} \}$. For every $i$, $1\leq i \leq (p-1)n$, $N_b(u_i) \cap V_1$ is the vertex set of blue neighbors of $u_i$ in $V_1$. Denote by $a_i = |N_b(u_i) \cap V_1| = \deg_b(u_1,V_1)$. Vertex $u_i$ has $a_i$ blue neighbors in $V_1$ and $\binom{a_i}{2}$ pairs of blue neighbors in $V_1$. If $\sum_{i=1}^{(p-1)n} \binom{a_i}{2} > \binom{n}{2}$ then, by pigeonhole principle, there exists two vertices $u_i$ and $u_j$ with a common pair of blue neighbors, say $y_1, y_2$ in $V_1$, and there is blue cycle $(u_i,y_1,u_j,y_2)$ in the graph. Since $\sum_{i=1}^{(p-1)n}a_i = \sum_{u\in U} \deg_b(u, V_1) = \sum_{v\in V_1} \deg_b(v) \geq w$, by Theorem~\ref{CS}, 
$$\sum_{i=1}^{(p-1)n} \binom{a_i}{2} \geq r\binom{a+1}{2}+(a(p-1)n)\binom{a}{2}$$
$$\geq r\binom{a+1}{2}+((p-1)n-r)\binom{a}{2}.$$
Thus, if $r\binom{a+1}{2}+((p-1)n-r)\binom{a}{2}>\binom{n}{2}$ then every $k$-edge-coloring of $K_n^p$ contains a monochromatic cycle $C_4$.
\end{proof}

For every $p,k\geq 2$ inequality~\eqref{jeden} is satisfied if $n$ is big enough (for example $n=k^2$, for $p \geq 3$). For $p=3$ and $k=4$, $n=11$ is the smallest integer satisfy~\eqref{jeden}, so $r_3(C_4,4) \leq 11$. In a similar way we can bound other numbers. Table~\ref{tab1} presents examples of upper bounds for $k$-color, $p$-partite Ramsey numbers $r_p(C_4,k)$ which can be obtained from Theorem~\ref{gora} (for $2\leq p \leq 6$ and $2\leq k \leq 10$). In the bipartite case, we obtain results that are equal to the exact value for $k\leq 4$ and one worse than the best known bound ($k^2+k+1$~\cite{DDR}) for $k\geq 5$.

\begin{table}
\begin{tabular}{c|cccccc}
$k$ $\backslash$ $p$& bipartite &  tripartite &  $4$-partite &  $5$-partite &  $6$-partite \\
\hline
2 colors & \textbf{5} &          \textbf{3} &          3 &          3 &          3 \\
3	& \textbf{11} &         \textbf{7} &          5 &          \textbf{4} &          4 \\
4	& \textbf{19} &         \textbf{11} &         9 &          7 &          6 \\
5	& 29 &         17 &         12 &         11 &         9 \\
6	& 41 &         23 &         17 &         14 &         13 \\
7	& 55 &         31 &         23 &         18 &         16 \\
8	& 71 &         39 &         28 &         23 &         20 \\
9	& 89 &         49 &         35 &         29 &         24 \\
10	& 109 &        59 &         43 &         34 &         29 \\
\end{tabular}
\caption{Upper bounds for $r_p(C_4,k)$}
\label{tab1}
\end{table}

\section{Two colors multipartite Ramsey numbers}

\begin{lem} For positive integer $p$,
$$p \geq R_k(C_4) \text{ if and only if }r_p(C_4,k) = 1.$$
\label{duzopartycji}
\end{lem}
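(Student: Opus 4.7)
The plan is to exploit the fact that when $n=1$, the complete multipartite graph $K_1^p$ coincides with the ordinary complete graph $K_p$, since each partition contributes a single vertex and all pairs of these $p$ vertices are joined. Once this identification is in place, the statement $r_p(C_4,k)=1$ becomes, by definition, the assertion that every $k$-edge-coloring of $K_p$ produces a monochromatic $C_4$. That reformulation alone essentially dissolves the lemma into the definition of $R_k(C_4)$.

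For the forward direction, I would assume $p \geq R_k(C_4)$ and take an arbitrary $k$-edge-coloring of $K_1^p = K_p$. Because $K_p$ contains $K_{R_k(C_4)}$ as a subgraph, the restricted coloring on this subgraph must contain a monochromatic $C_4$ by definition of the classical Ramsey number. Hence $n=1$ already suffices to force a monochromatic quadrilateral, and since $n$ must be a positive integer we conclude $r_p(C_4,k)=1$.

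For the reverse direction, I would assume $r_p(C_4,k)=1$ and take any $k$-edge-coloring of $K_p$. Interpreting $K_p$ as $K_1^p$, the hypothesis gives a monochromatic $C_4$. Thus every $k$-coloring of $K_p$ contains such a $C_4$, which by the definition of $R_k(C_4)$ as the least integer with this property forces $p \geq R_k(C_4)$.

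There is really no hard step here: the entire content is the observation $K_1^p = K_p$, after which both implications unwind directly from the definitions of $r_p(C_4,k)$ and $R_k(C_4)$. The only mild point worth stating carefully is that the ``least positive integer'' clause in the definition of $r_p(C_4,k)$ makes $1$ the minimum possible value, so demonstrating that $n=1$ works is the same as demonstrating $r_p(C_4,k)=1$.
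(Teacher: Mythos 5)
Your proof is correct and follows essentially the same route as the paper: both rest on the identification $K_1^p=K_p$ and then unwind the two definitions, the only cosmetic difference being that you argue the reverse implication directly while the paper states its contrapositive.
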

\begin{proof}
If $p \geq R_k(C_4)$ then every $k$-edge-coloring of $K_p = K_1^p$ contains a monochromatic $C_4$ so $r_p(C_4,k) = 1$. \\
If $1 \leq p < R_k(C_4)$, then there exist a $k$-edge-coloring of $K_1^p = K_p$ without monochromatic $C_4$ so $r_p(C_4,k) > 1$. 
\end{proof}

Since $K_n^{p-1}$ is a subgraph of $K_n^p$ and the latter is a subgraph of $K_{n+1}^p$, the following two monotonic properties hold:
\begin{itemize}
    \item If $k_1>k_2$ then $r_p(C_4,k_1) \geq r_p(C_4,k_2).$
    \item If $p_1>p_2$ then $r_{p_1}(C_4,k) \leq r_{p_2}(C_4,k).$
\end{itemize}



\begin{thm}{Two-color multipartite Ramsey number for quadrilateral}
$$r_p(C_4,2) = 
\begin{cases}
5 & \text{ for } p=2,\\
3 & \text{ for } p=3,\\
2 & \text{ for } p=4 \text{ or } 5,\\
1 & \text{ for } p \geq 6.
\end{cases}$$
\label{dwakoloryduzopartycji}
\end{thm}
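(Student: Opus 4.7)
The plan is to handle each case of the theorem in turn. For $p=2,3$ and $p\ge 6$ the result is essentially a recitation of facts already present in the paper: $r_2(C_4,2)=5$ comes from Beineke and Schwenk~\cite{BeS}, $r_3(C_4,2)=3$ from Buada, Samana and Longani~\cite{BSL}, and the case $p\ge 6$ is a one-line consequence of Lemma~\ref{duzopartycji} together with the Chv\'atal--Harary value $R_2(C_4)=6$~\cite{CH1}. So the real work is to prove $r_4(C_4,2)=r_5(C_4,2)=2$.

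For the lower bound $r_p(C_4,2)\ge 2$ at $p\in\{4,5\}$, I would produce an explicit $C_4$-free $2$-colouring of $K_p=K_1^p$, which is available because $R_2(C_4)=6>p$. Concretely, for $p=4$ colour two edges incident to a common vertex red and the remaining four blue; the red side is a path and the blue side has degree sequence $(3,2,2,1)$, so neither contains a $C_4$. For $p=5$, decompose $K_5$ into two edge-disjoint $5$-cycles.

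For the upper bound, monotonicity reduces the problem to $r_4(C_4,2)\le 2$: I need to show that every $2$-colouring of $K_2^4$ contains a monochromatic $C_4$. Assume for contradiction that none does. Since $K_2^4$ has $24$ edges, one colour---say red---has at least $12$ edges, forming a $C_4$-free subgraph $G_r$ on $8$ vertices. The standard double-counting identity $\sum_v\binom{d_r(v)}{2}=\sum_{\{u,w\}}|N_r(u)\cap N_r(w)|$ combined with $C_4$-freeness gives $\sum_v\binom{d_r(v)}{2}\le\binom{8}{2}=28$. Applying Theorem~\ref{CS} to $\sum_v d_r(v)=2e_r$: if $e_r\ge 13$ the integer lower bound is $2\binom{4}{2}+6\binom{3}{2}=30>28$, a contradiction. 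Hence $e_r=12$ (and symmetrically $e_b=12$).

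The main obstacle is then to rule out a $C_4$-free graph on $8$ vertices with exactly $12$ edges. The cleanest route is to invoke the known extremal value $\mathrm{ex}(8,C_4)=11$, which immediately contradicts $e_r=12$. A self-contained alternative goes by analysing the neighbourhood of any vertex $v$ in $G_r$: the equality case of Theorem~\ref{CS} forces the $3$-regular sequence $(3^8)$, but since each pair of neighbours of $v$ shares no common neighbour other than $v$, a $3$-regular realisation would need at least $10$ vertices when $N(v)$ is independent, while an edge inside $N(v)$ forces the second shell to be a $4$-cycle---both impossible on $8$ vertices. The remaining admissible degree sequences with sum $24$ and $\sum\binom{d_v}{2}\le 28$ (such as $(4,3^6,2)$, $(4^2,3^4,2^2)$, $(5,3^6,1)$, $(4^4,2^4)$) are eliminated analogously by inspecting the forced matching-like structure around a maximum-degree vertex and exhibiting an unavoidable $4$-cycle.
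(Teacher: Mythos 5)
Your treatment of the cases $p=2$, $p=3$ and $p\ge 6$, and of the lower bound for $p\in\{4,5\}$, coincides with the paper's (the paper gets the lower bound from Lemma~\ref{duzopartycji} plus monotonicity rather than from explicit colourings of $K_4$ and $K_5$, but both are immediate). Where you genuinely diverge is the upper bound $r_4(C_4,2)\le 2$: the paper proves it by a structural case analysis inside $K_2^4$ (Claim~\ref{claim1} reduces to the situation where every vertex has degree $3$ in each colour, Claim~\ref{claim2} restricts the shape of the monochromatic neighbourhoods, and a short final case check produces the $C_4$), whereas you argue by edge counting: a monochromatic colour class is a $C_4$-free subgraph of $K_2^4\subset K_8$, and since $e(K_2^4)=24>2\cdot\mathrm{ex}(8,C_4)=22$, some colour class has at least $12>11$ edges and must contain a $C_4$. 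That route is correct and considerably shorter, and in fact your intermediate Cauchy--Schwarz step (forcing $e_r=e_b=12$) is not even needed once $\mathrm{ex}(8,C_4)=11$ is on the table; the trade-off is that it imports an external extremal value that the paper neither cites nor proves, while the paper's argument is self-contained. One caution: your proposed self-contained fallback for ruling out a $12$-edge $C_4$-free graph on $8$ vertices is only a sketch and, as written, incomplete --- the list of degree sequences with sum $24$ and $\sum_v\binom{d_v}{2}\le 28$ is not exhausted by the ones you name (e.g.\ $(5,3^5,2^2)$ and $(5,4,3^3,2^3)$ also qualify), and ``eliminated analogously'' hides most of the work. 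If you intend the proof to stand without citing $\mathrm{ex}(8,C_4)=11$, that elimination must be carried out in full; otherwise, state the citation explicitly (e.g.\ Clapham--Flockhart--Sheehan) and drop the sketch.
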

\begin{proof}
Beineke and Schwenk in~\cite{BeS} show that $r_2(C_4,2)=5$.
Buada {\it et al.}~\cite{BSL} show that $r_3(C_4,2)=3$.
Since $R_2(C_4)=6$~\cite{CH1}, then by Lemma~\ref{duzopartycji}, we have $r_p(C_4,2) = 1$, for $p \geq 6$, and (by Lemma~\ref{duzopartycji} and monotonic) $r_4(C_4,2) \geq r_5(C_4,2) > 1$.

To finish the proof, we shall show that every $2$-edge-coloring of $K_2^4$ with partition sets $X_1=\{x_1,x_2\},X_2=\{w_1,w_2\},X_3=\{y_1,y_2\}$, and $X_4=\{z_1,z_2\}$, contains monochromatic $C_4$. Consider any $2$- edges coloring of $G = K_2^4=(\{x_1,x_2,w_1,w_2,y_1,y_2,z_1,z_2\},E(G))$ say $(G^r=(V(G), E_r), G^b=(V(G), E_b))$, where
\[E_r\cup E_b=E(G)=\binom{V(G)}{2} \setminus \{\{x_1,x_2\}, \{w_1,w_2\},  \{y_1,y_2\}, \{z_1,z_2\}\}.\]
Now, we have the following claims:
\begin{claim}
If there exists a vertex of $V(G)$, say $x_1\in X_1$, and a color, say red, such that $\deg_r(x)\geq 4$ then the coloring contains monochromatic $C_4$.
\label{claim1}
\end{claim}
\begin{proof}
Considering vertex $x_2$. As we aim to avoid $C_4$ then at least 3 edges from $x_2$ to $N_r(x_1)$ must be colored blue (denoted as color $b$). Consider three vertices 
$\{v_1,v_2,$ $v_3\} \subset N_r(x_1) \cap N_b(x_2)$. \\
a) If those vertices are in different partition then consider the triangle $\{v_1,v_2,v_3\}$. Two of the edges of this triangle must have the same color. 
These two edges create monochromatic $C_4$ with $x_1$ (if they are red) or $x_2$ (if blue).\\
b) Suppose that two of those vertices are in the same partition. Without loss of generality we can assume that $\{v_1,v_2,v_3\} = \{y_1,y_2,z_1\}$. Consider vertex $w_1$ and colors of the edges $\{w_1,y_1\}$, $\{w_1,y_2\}$ and $\{w_1,z_1\}$. Two of these edges must have the same color,  creating a monochromatic $C_4$ with $x_1$ or $x_2$.
\end{proof}
In the sequel we assume that every vertex in each color has degree 3. 
\begin{claim}
If there exists a vertex of $V(G)$, say $x_1\in X_1$, and a color say $b$, such that $X_i\subseteq N_b(x_1)$ for one $i\in\{2,3,4\}$, then the coloring contains monochromatic $C_4$.
\label{claim2}
\end{claim}
\begin{proof}
Without loss of generality let $N_b(x_1)= \{w_1,w_2, y_1\}$, therefore $N_r(x_1)= \{y_2,z_1,z_2\}$. Now considering $x_2$, one can check that $ \{w_1,w_2\}\subseteq N_r(x_2)$ and $ \{z_1,z_2\}\subseteq N_b(x_2)$. Otherwise, by contrary assume that $x_2z_1\in E_r$ (for other case the proof is same), hence as $\deg_b(z_1)=\deg_r(z_1)=3$, and $x_iz_1\in E_r$, then one can say that  $x_1$, $z_1$ and two vertices from $N_b(x_1) \cap N_b(z_1)$ create blue $C_4$.  If $x_2y_1\in E_b$ then as $\deg_r(y_1)=\deg_b(y_1)=3$, and $x_iy_1\in E_b$ one can say that either $x_1$, $y_1$ and two vertices from $N_r(x_1) \cap N_r(y_1)$ create red $C_4$ or $x_2$, $y_1$ and two vertices from $N_r(x_2) \cap N_r(y_1)$ create red $C_4$. So assume that $ N_r(x_1)=N_b(x_2)$ and $N_r(x_2)= N_b(x_1)$. For one $i=1,2$, $y_2z_i\in E_b$, otherwise $C_4\subseteq G^r$. Without loss of generality let  $y_2z_1\in E_b$. Therefore as $\deg_r(z_1)=3$, and $x_2z_1, y_2z_1\in E_b$ one can say that  $x_2$, $z_1$ and two vertices from $N_r(x_2) \cap N_r(z_1)$ create red $C_4$. Hence claim holds.
\end{proof}
Now by Claim~\ref{claim2}, without loss of generality we may suppose that  $N_b(x_1)=\{w_1,y_1,z_1\}$. Assume that $|N_b(x_1)\cap N_b(x_2)| \neq 0$, and without loss of generality let $w_1\in N_b(x_2)$. Hence by Claim~\ref{claim2}, $x_1w_2,x_2w_2\in E_r$.   Now, as $\deg_r(w_1)=3$ and $w_1x_i\in E_b$,  either $x_1$, $w_1$ and two vertices from $N_r(x_1) \cap N_r(w_1)$ create red $C_4$ or $x_2$, $w_1$ and two vertices from $N_r(x_2) \cap N_r(w_1)$ create red $C_4$.  So, let $N_b(x_2)=N_r(x_1)$ and $N_r(x_2)=N_b(x_1)$. Consider $w_1$ and without loss of generality assume that $w_1z_1\in E_b$. Therefore by Claim~\ref{claim2} we have $w_1z_2, w_2z_1\in E_r$ and $w_2z_2\in E_b$. If either $w_1y_1\in E_b$ or $y_1z_1\in E_b$ then one can check that $C_4\subseteq G^b[\{x_1, w_1,y_1,z_1\}]$. So we may suppose that $w_1y_1$, $y_1z_1\in E_r$, in this case we have $C_4\subseteq G^r[\{x_2, w_1,y_1,z_1\}]$, hence the proof is complete.
\end{proof}
\section{Tripartite Ramsey numbers}

\begin{thm} For every integer $k\geq 2$,
$$r_3(C_4,k) \leq 
\begin{cases} 
\frac{(k+1)^2-1}{2}-1 & \text{for even }k, \\
\frac{(k+1)^2}{2}-1 & \text{for odd }k. 
\end{cases}$$
\label{3partgora}
\end{thm}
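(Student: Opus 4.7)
The plan is to apply Theorem~\ref{gora} with $p = 3$ and $n = \lfloor (k+1)^2/2\rfloor - 1$, thereby reducing the task to verifying hypothesis~\eqref{jeden}. Using the algebraic identity
\[
r\binom{a+1}{2} + (2n-r)\binom{a}{2} = a\bigl(r + n(a-1)\bigr),
\]
\eqref{jeden} becomes $a(r + n(a-1)) > \binom{n}{2}$, which I would check explicitly once $w$, $a$, and $r$ are computed.

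I would split on the parity of $k$. For even $k = 2m$ one has $n = 2m(m+1)-1$ and $3n^2/k = 6m(m+1)^2 - 6(m+1) + 3/(2m)$. For $m \geq 2$ the fractional part lies in $(0,1)$, so $\lceil 3n^2/k\rceil = 6m(m+1)^2 - 6(m+1) + 1$, and a second ceiling yields $w = 4m(m+1)^2 - 4m - 3$. Dividing $w$ by $2n = 4m^2 + 4m - 2$ gives $a = m$ and $r = 4m^2 + 2m - 3$. A direct expansion shows the gap $a(r+n(a-1)) - n(n-1)/2$ equals $m-1$, strictly positive for $m \geq 2$. The remaining case $k = 2$ I verify by hand: $n = 3$, $w = 10$, $a = 1$, $r = 4$, giving $1 \cdot 4 > 3 = \binom{3}{2}$.

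For odd $k = 2m+1$ one has $n = 2(m+1)^2 - 1$ and $2n \equiv -1 \pmod{2m+1}$, hence $3n^2 \equiv 3m^2 \pmod{2m+1}$. Using the identity $12m^2 = (2m+1)(6m-3) + 3$ together with the closed form for $4^{-1} \bmod (2m+1)$ (namely $(m+1)/2$ when $m$ is odd, $(3m+2)/2$ when $m$ is even), I find that $3m^2 \bmod (2m+1)$ equals $3(m+1)/2$ for odd $m \geq 3$, $(m+2)/2$ for even $m$, and $0$ for $m=1$. Propagating through the second ceiling (which further splits on whether $\lceil 3n^2/k\rceil$ is divisible by $3$) yields $a = m+1$ and $r = 2m^2 + 3m$. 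Substitution then gives the gap $m(m+2) > 0$ for all $m \geq 1$.

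The main technical burden is bookkeeping the two nested ceiling operations across several parity sub-cases; the key numerical coincidence is that the chosen $n$ is the smallest value for which these ceilings round up by just enough to push $\sum\binom{a_i}{2}$ strictly above $\binom{n}{2}$. Once the sub-cases are separated, verification of~\eqref{jeden} reduces to elementary polynomial arithmetic.
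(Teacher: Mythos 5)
Your proposal is correct and follows essentially the same route as the paper: apply Theorem~\ref{gora} with $p=3$ and $n=\lfloor (k+1)^2/2\rfloor-1$, compute $w$, $a$, $r$ by parity of $k$ (your values $a=m$, $r=4m^2+2m-3$ for $k=2m$ and $a=m+1$, $r=2m^2+3m$ for $k=2m+1$ agree with the paper's, as do the resulting gaps $m-1$ and $m(m+2)$). The only difference is cosmetic: for $k=2$ the paper cites the known value $r_3(C_4,2)=3$ from the literature, whereas you verify inequality~\eqref{jeden} directly ($w=10$, $a=1$, $r=4$, giving $4>3$), which is a valid and slightly more self-contained treatment of that one case.
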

\begin{proof}
For $k=2$ we know (from~\cite{BSL}) that $r_3(C_4,2) = 3 = \frac{(2+1)^2-1}{2}-1$. For even $k>2$, we use Theorem~\ref{gora}. Note that, for
$n=\frac{(k+1)^2-1}{2}-1$
\begin{itemize}
\setlength\itemsep{-1em}
\item $\Big\lceil \frac{n^2p(p-1)}{2k} \Big\rceil = \frac{3}{4}k^3+3k^2-5$\\
\item $w=\frac{1}{2}k^3+2k^2-3$,\\
\item $a=\frac{k}{2}$,\\
\item $r=k^2+k-3$,\\
\end{itemize}
and inequality $r\binom{a+1}{2}+((p-1)n-r)\binom{a}{2}>\binom{n}{2}$ can be simplified to $k>2$, so $r_p(C_4,k) \leq n$ for even $k\geq 2$.

For odd $k>1$ note that, for
$n=\frac{(k+1)^2}{2}-1$
\begin{itemize}
\setlength\itemsep{-1em}
\item $\Big\lceil \frac{n^2p(p-1)}{2k} \Big\rceil = \frac{3}{4}k^3 + 3k^2 + \frac{3}{2}k - \frac{9}{4} - \frac{1}{4} ((k-1)\mod 4)$\\
\item $w=\frac{k^3}{2}+ 2 k^2+ k -\frac{3}{2}  $,\\
\item $a=\frac{k+1}{2}$,\\
\item $r=\frac{k^2+k-2}{2}$,\\
\end{itemize}
and inequality $r\binom{a+1}{2}+((p-1)n-r)\binom{a}{2}>\binom{n}{2}$ can be simplified to $k(k+2)>3$ so $r_p(C_4,k) \leq n$ for odd $k\geq 3$.
\end{proof}

From~\cite{BSL} we know exact values $r_3(C_4,2)=3$ and $r_3(C_4,3)=7$. The next Theorem shows that the upper bound presented in Theorem~\ref{3partgora} is sharp for $k=4$.

\begin{thm}
$r_3(C_4,4)=11$.
\label{twc411}
\end{thm}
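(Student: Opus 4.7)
The upper bound $r_3(C_4,4) \le 11$ is immediate from Theorem~\ref{3partgora}: plugging $k=4$ into the even-$k$ formula yields $\frac{(4+1)^2-1}{2} - 1 = 11$. So everything reduces to proving the matching lower bound $r_3(C_4,4) \ge 11$, which amounts to exhibiting a $4$-edge-coloring of $K_{10}^3$ that is free of monochromatic $C_4$.

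Let the three parts be $V_1,V_2,V_3$, each of size $10$. My plan is to produce an explicit coloring, most likely by identifying each $V_i$ with a common group (e.g.\ $\mathbb{Z}_{10}$ or $\mathbb{Z}_2 \times \mathbb{Z}_5$) and defining the color of each edge through a symmetric algebraic rule. The $300$ edges of $K_{10}^3$ split into three bipartite blocks of $100$ edges each, so in a balanced coloring each color uses about $25$ edges per block. This is safely below the Zarankiewicz threshold $z(10,10;2,2) \approx 34$ for a $C_4$-free bipartite subgraph of $K_{10,10}$, which suggests such a coloring should exist.

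Given any candidate coloring, the verification splits into two cases according to how a potential monochromatic $C_4$ meets the three parts. A $(2,2,0)$-cycle lies inside a single bipartite block, so for each color $c$ and each ordered pair of parts $(V_i,V_j)$ one checks that no two vertices of $V_i$ share two $c$-colored neighbors in $V_j$. A $(2,1,1)$-cycle spans all three parts, so for each color $c$ and each part $V_i$ one checks that no two vertices $u,u'\in V_i$ simultaneously share a $c$-colored neighbor in $V_j$ \emph{and} a $c$-colored neighbor in $V_k$. Exploiting the symmetries of an algebraic construction typically reduces both tests to a small list of cases.

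The main obstacle is locating the construction itself: the coloring must be nearly extremal on all three bipartite blocks at once, \emph{and} must prevent the cross-part $(2,1,1)$ cycles, which are not controlled by any single bipartite invariant. In practice such a coloring is usually either obtained by adapting the known $r_2(C_4,4)=19$ construction on $K_{9,9}$ to three partitions, or produced by computer search and then presented as three $10\times 10$ colored incidence matrices, after which verifying both cycle types is mechanical.
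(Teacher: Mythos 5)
Your reduction is the same as the paper's: the upper bound $r_3(C_4,4)\le 11$ does follow by plugging $k=4$ into Theorem~\ref{3partgora}, and the lower bound does reduce to exhibiting a $4$-edge-coloring of $K_{10}^3$ with no monochromatic $C_4$. Your description of the verification is also sound --- one must rule out both the monochromatic cycles lying in a single bipartite block (two vertices of $V_i$ sharing two like-colored neighbours in $V_j$) and the cycles of type $(2,1,1)$ spanning all three parts. But the proof has a genuine gap: you never produce the coloring. Everything after ``My plan is to produce an explicit coloring'' is a feasibility heuristic (the edge count per colour per block being below the Zarankiewicz threshold $z(10,10;2,2)$) plus a statement that such a construction is ``usually'' found by adapting the $K_{9,9}$ construction or by computer search. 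A counting argument showing that a colouring is not obviously impossible is not a proof that one exists; the $(2,1,1)$ constraint in particular couples the three blocks and is exactly the part that no single bipartite invariant controls, as you yourself note. Without an explicit witness, the lower bound $r_3(C_4,4)\ge 11$ is unproved and the theorem is not established.

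For comparison, the paper supplies this missing witness concretely: each part of size $10$ is split into two halves of size $5$, the adjacency matrix is organised into a $6\times 6$ grid of $5\times 5$ blocks, and every off-diagonal block $M_{i,j}$ is a circulant matrix generated by a single row $(a,b,c,d,e)$ of colours from $\{1,2,3,4\}$ (Figure~\ref{c411}). The circulant structure gives a $\mathbb{Z}_5$ symmetry that makes the finite check of both cycle types tractable, which is essentially the ``algebraic rule plus symmetry'' strategy you gestured at --- but gesturing at a strategy is not the same as carrying it out. To complete your proof you would need to either reproduce such a matrix and verify it, or give an algebraic definition of the colouring together with an actual case analysis showing no colour class contains a $C_4$.
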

\begin{proof} From Theorem~\ref{3partgora} we have $r_3(C_4,4)\leq 11$. For the lower bound we present adjacency matrix of $4$-coloring of $K_{10}^3$ (Fig.~\ref{c411}). 
The entry in the $i$-th row and $j$-th column of the matrix refer to the color of edge $(i,j)$. Value $0$ means that there is no edge and values greater than $0$ are colors.

The matrix in Figure~\ref{c411} is written in partitioned form as\\
$$\begin{bmatrix}
0 & 0 & M_{1,2} & M_{1,3} & M_{1,4} & M_{1,5} \\
0 & 0 & M_{2,2} & M_{2,3} & M_{2,4} & M_{2,5} \\
M_{1,2}^T & M_{2,2}^T & 0 & 0 & M_{3,4} & M_{3,5} \\
M_{1,3}^T & M_{2,3}^T & 0 & 0 & M_{4,4} & M_{4,5} \\
M_{1,4}^T & M_{2,4}^T & M_{3,4}^T & M_{4,4}^T & 0 & 0 \\
M_{1,5}^T & M_{2,5}^T & M_{3,5}^T & M_{4,5}^T & 0 & 0 \\
\end{bmatrix},
$$\\
where each block $M_{i,j}$ in this partitioned matrix is $5\times 5$ matrix of the form:
$$\begin{bmatrix}
a & b & c & d & e \\
e & a & b & c & d \\
d & e & a & b & c \\
c & d & e & a & b \\
b & c & d & e & a \\
\end{bmatrix},
$$
\end{proof}

\begin{figure}[ht!]
\begin{center}
{\small
\begin{allintypewriter}
\begin{tabular}{c|c||c|c||c|c}
X 0 0 0 0&	0 0 0 0 0&	1 4 2 3 2&	2 1 3 4 4&	1 3 2 2 3&	4 1 1 4 3\\
0 X 0 0 0& 	0 0 0 0 0&	2 1 4 2 3&	4 2 1 3 4&	3 1 3 2 2&	3 4 1 1 4\\
0 0 X 0 0& 	0 0 0 0 0&	3 2 1 4 2&	4 4 2 1 3&	2 3 1 3 2&	4 3 4 1 1\\
0 0 0 X 0& 	0 0 0 0 0&	2 3 2 1 4&	3 4 4 2 1&	2 2 3 1 3&	1 4 3 4 1\\
0 0 0 0 X& 	0 0 0 0 0&	4 2 3 2 1&	1 3 4 4 2&	3 2 2 3 1&	1 1 4 3 4\\
\hline
0 0 0 0 0& 	X 0 0 0 0&	3 2 3 1 4&	3 1 1 2 4&	4 4 1 2 1&	3 2 4 2 3\\
0 0 0 0 0& 	0 X 0 0 0&	4 3 2 3 1&	4 3 1 1 2&	1 4 4 1 2&	3 3 2 4 2\\
0 0 0 0 0& 	0 0 X 0 0&	1 4 3 2 3&	2 4 3 1 1&	2 1 4 4 1&	2 3 3 2 4\\
0 0 0 0 0& 	0 0 0 X 0&	3 1 4 3 2&	1 2 4 3 1&	1 2 1 4 4&	4 2 3 3 2\\
0 0 0 0 0& 	0 0 0 0 X&	2 3 1 4 3&	1 1 2 4 3&	4 1 2 1 4&	2 4 2 3 3\\
\hline
\hline
1 2 3 2 4& 	3 4 1 3 2&	X 0 0 0 0&	0 0 0 0 0&	3 3 4 1 1&	4 4 1 2 2\\
4 1 2 3 2& 	2 3 4 1 3&	0 X 0 0 0&	0 0 0 0 0&	1 3 3 4 1&	2 4 4 1 2\\
2 4 1 2 3& 	3 2 3 4 1&	0 0 X 0 0&	0 0 0 0 0&	1 1 3 3 4&	2 2 4 4 1\\
3 2 4 1 2& 	1 3 2 3 4&	0 0 0 X 0&	0 0 0 0 0&	4 1 1 3 3&	1 2 2 4 4\\
2 3 2 4 1& 	4 1 3 2 3&	0 0 0 0 X&	0 0 0 0 0&	3 4 1 1 3&	4 1 2 2 4\\
\hline
2 4 4 3 1& 	3 4 2 1 1&	0 0 0 0 0&	X 0 0 0 0&	2 4 2 4 3&	1 3 1 3 2\\
1 2 4 4 3& 	1 3 4 2 1&	0 0 0 0 0&	0 X 0 0 0&	3 2 4 2 4&	2 1 3 1 3\\
3 1 2 4 4& 	1 1 3 4 2&	0 0 0 0 0&	0 0 X 0 0&	4 3 2 4 2&	3 2 1 3 1\\
4 3 1 2 4& 	2 1 1 3 4&	0 0 0 0 0&	0 0 0 X 0&	2 4 3 2 4&	1 3 2 1 3\\
4 4 3 1 2& 	4 2 1 1 3&	0 0 0 0 0&	0 0 0 0 X&	4 2 4 3 2&	3 1 3 2 1\\
\hline
\hline
1 3 2 2 3& 	4 1 2 1 4&	3 1 1 4 3&	2 3 4 2 4&	X 0 0 0 0&	0 0 0 0 0\\
3 1 3 2 2& 	4 4 1 2 1&	3 3 1 1 4&	4 2 3 4 2&	0 X 0 0 0&	0 0 0 0 0\\
2 3 1 3 2& 	1 4 4 1 2&	4 3 3 1 1&	2 4 2 3 4&	0 0 X 0 0&	0 0 0 0 0\\
2 2 3 1 3& 	2 1 4 4 1&	1 4 3 3 1&	4 2 4 2 3&	0 0 0 X 0&	0 0 0 0 0\\
3 2 2 3 1& 	1 2 1 4 4&	1 1 4 3 3&	3 4 2 4 2&	0 0 0 0 X&	0 0 0 0 0\\
\hline
4 3 4 1 1& 	3 3 2 4 2&	4 2 2 1 4&	1 2 3 1 3&	0 0 0 0 0&	X 0 0 0 0\\
1 4 3 4 1& 	2 3 3 2 4&	4 4 2 2 1&	3 1 2 3 1&	0 0 0 0 0&	0 X 0 0 0\\
1 1 4 3 4& 	4 2 3 3 2&	1 4 4 2 2&	1 3 1 2 3&	0 0 0 0 0&	0 0 X 0 0\\
4 1 1 4 3& 	2 4 2 3 3&	2 1 4 4 2&	3 1 3 1 2&	0 0 0 0 0&	0 0 0 X 0\\
3 4 1 1 4& 	3 2 4 2 3&	2 2 1 4 4&	2 3 1 3 1&	0 0 0 0 0&	0 0 0 0 X\\
\end{tabular}
\end{allintypewriter}
}
\end{center}
\caption{Matrix of $4$-edge-coloring of $K_{10}^3$ without monochromatic $C_4$.}
\label{c411}
\end{figure}

\section*{Acknowledgements}
I would like to thank Andrzej Szepietowski for comments on a preliminary version of this paper.

\newpage


\begin{thebibliography}{99}
\bibitem{BeS}
L. W. Beineke and A. J. Schwenk,
On a Bipartite Form of the Ramsey Problem,
{\it Proceedings of the Fifth British Combinatorial
Conference 1975, Congressus Numerantium},
{\bf XV} (1976) 17--22. 

\bibitem{BiS}
A. Bialostocki and J. Sch\"{o}nheim, 
On Some Tur\'an and Ramsey Numbers for C4, 
{\it Graph Theory and Combinatorics} (ed. B. Bollob\'as), 
Academic Press, London, (1984) 29--33.

\bibitem{BSL}
S. Buada, D. Samana, and V. Longani, 
The Tripartite Ramsey Numbers $r_t(C_4; 2)$ and $r_t(C_4; 3)$, 
{\it Italian Journal Of Pure And Applied Mathematics}
{\bf 33} (2014) 383--400

\bibitem{CH1}
V. Chv\'atal and F. Harary,
Generalized Ramsey Theory for Graphs, II. Small Diagonal Numbers,
{\it Proceedings of the American Mathematical Society},
{\bf 32} (1972) 389--394.

\bibitem{DDR} J. Dybizba\'nski, T. Dzido and S. Radziszowski, 
On Some Zarankiewicz Numbers and Bipartite Ramsey Numbers for Quadrilateral,
{\it Ars Combinatoria}, 
{\bf CXIX} (2015) 275--287.

\bibitem{Ex1}
G. Exoo,
A Bipartite Ramsey Number,
{\it Graphs and Combinatorics},
{\bf 7} (1991) 395--396.

\bibitem{Ex2}
G. Exoo,
Constructing Ramsey Graphs with a Computer,
{\it Congressus Numerantium},
{\bf 59} (1987) 31--36.

\bibitem{GHO}
W. Goddard, M. A. Henning, and O. R. Oellermann,
Bipartite Ramsey Numbers and Zarankiewicz Numbers,
{\it Discrete Mathematics}, 
{\bf 219} (2000) 85--95.

\bibitem{HaH}
J. H. Hattingh and M. A. Henning,
Bipartite Ramsey Theory,
{\it Utilitas Mathematica},
{\bf 53} (1998) 217--230.

\bibitem{LaM}
F. Lazebnik and D. Mubayi,
New Lower Bounds for Ramsey Numbers of Graphs and Hypergraphs,
{\it Advances in Applied Mathematics},
{\bf 28} (2002) 544--559.

\bibitem{LaWo}
F. Lazebnik and A. Woldar, 
New Lower Bounds on the Multicolor Ramsey Numbers $r_k(C_4)$, 
{\it Journal of Combinatorial Theory}, 
Series B, {\bf 79} (2000) 172--176.

\bibitem{Rad} S. P. Radziszowski, Small Ramsey Numbers,
{\it Electronic Journal of Combinatorics}, Dynamic Survey {\bf DS1},
revision \#16 (2021),
{\tt http://www.combinatorics.org}.

\bibitem{SunYLZ1}
Sun Yongqi, Yang Yuansheng, Lin Xiaohui, Zheng Wenping,
The Value of the Ramsey Number $R_4(C_4)$,
{\it Utilitas Mathematica},
{\bf 73} (2007) 33--44.

\bibitem{SP}
B. Steinbach and Ch. Posthoff,
Extremely Complex 4-Colored Rectangle-Free Grids: Solution of Open Multiple-Valued Problems, 
{\it Proceedings of the IEEE 42nd International Symposium on Multiple-Valued Logic}, Victoria, British Columbia, Canada, (2012) 37--44.

\end{thebibliography}
\end{document}